\newtheorem{Theorem}{Theorem}
\newtheorem{Lemma}{Lemma}
\newtheorem{Corollary}{Corollary}
\newcommand{\cost}{\mathbf{cost}}
\newcommand{\area}{\mathbf{area}}
\newcommand{\Ints}{{\mathbb{Z}}}
\newcommand{\CC}{{\mathcal C}}
\newcommand{\GG}{{\mathcal G}}
\newcommand{\Npair}{N_{\mbox{\scriptsize{pair}}}}
 \newcommand{\Ex}{{\mathbb E}}
 \renewcommand{\Pr}{{\mathbb{P}}}
 \newcommand{\sfrac}[2]{{\textstyle\frac{#1}{#2}}}
\DeclareMathOperator{\Var}{var}
 \title{Route Lengths in Invariant Spatial Tree Networks}
 \author{David J. Aldous\thanks{Research supported by NSF Grant DMS-1504802.}\\
Department of Statistics\\
367 Evans Hall \#\  3860\\
U.C. Berkeley CA 94720\\
aldous@stat.berkeley.edu}
\begin{document}
\maketitle

\begin{abstract}
Is there a constant $r_0$ such that, 
in any invariant tree network linking rate-$1$ Poisson points in the plane, the mean within-network distance between points at Euclidean distance $r$ is infinite for  $r > r_0$?
We prove a slightly weaker result.
This is a continuum analog of a result of Benjamini et al (2001) on invariant spanning trees of the integer lattice.
\end{abstract}

\section{Introduction}
Parts of classical stochastic geometry \cite{stoyan}, 
for instance Delaunay triangulations on random points, 
implicitly concern random spatial networks but without direct motivation as real-world network models.
Substantial recent literature, surveyed in the 2018 monograph \cite{barthelemy}, 
concerns toy models of more specific types of real-world spatial network, studied in statistical physics style rather than
theorem-proof style.
Intermediate between those styles, 
and  envisioning examples such as inter-city road networks, 
one can model the city positions as a Poisson point process, 
and one can study the trade-off
between a network's cost (taken as network length) and its effectiveness at providing short routes  \cite{lando,kendall,shun}. 
It is often remarked that tree networks are {\em obviously} very ineffective at providing short routes,
and the purpose of this article is to give one formalization, as Theorem \ref{T1}.

As background we mention two results for {\em lattice} models.
Consider $m^2$ cities  
at the vertices of the $m \times m$ grid.  
Any connected network must have length $\Omega(m^2)$, and the mean route-length between 
two uniform random points must be $\Omega(m)$.
Observe that these orders of magnitude can be attained by a tree-network; from each vertex create a unit 
edge to a neighbor vertex nearer to a central root. 
This type of construction extends readily to the Poisson model.
But this apparent ``linearity of mean route lengths" is in some ways misleading, in that it depends on a finite network having a central region.
{\em Infinite} tree networks with a spatial stationarity property are different,
as shown by the 
following elegant result of Benjamini et al. \cite{peres} in the infinite lattice setting.  
Here {\em invariant} means the distribution of the network is invariant under the automorphisms of the lattice.\footnote{Theorem \ref{T:peres} was stated 
in \cite{peres} Theorem 14.3 for a particular model, but as noted in \cite{lyons} Exercise 4.48 it holds in the general 
automorphism-invariant case.}
\begin{Theorem}[\cite{peres}]
\label{T:peres}
For any invariant random spanning tree in the infinite 2-dimensional square lattice, the 
(within-tree) route length $D$ between lattice-adjacent vertices satisfies
\[ \Pr(D \ge i) \ge \sfrac{1}{8i},\  i \ge 1 . \] 
In particular, $\Ex D = \infty$.
\end{Theorem}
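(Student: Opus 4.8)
The plan is to combine the Mass-Transport Principle with the planarity of $\mathbb{Z}^2$. By invariance of the law of $T$ under lattice automorphisms, $\Pr(D_e\ge i)$ is the same for every edge $e$; write $p_i$ for this common value, so the assertion is $p_i\ge\frac{1}{8i}$ for $i\ge 1$ (trivial when $i=1$), and $\Ex D=\sum_{i\ge 1}p_i=\infty$ then follows by summation.

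For an edge $e=\{u,v\}$ let $\gamma_e$ be the path in $T$ from $u$ to $v$, with vertices $u=x_0,x_1,\dots,x_{D_e}=v$; when $e\notin T$ the concatenation $\gamma_e\cup\{e\}$ is a simple cycle bounding a finite region $R_e$, with $|\partial R_e|=D_e+1$. The first step is to record the relevant planar structure. The edges not in $T$ dualise to a subgraph $T^{\ast}$ of the dual lattice; since $T$ is a spanning tree (it contains no cycle and separates no two vertices) $T^{\ast}$ is a spanning forest of the dual lattice with no finite component. Crucially, deleting a dual edge $e^{\ast}$ from $T^{\ast}$ splits off precisely the finite face-set $R_e$: the only $\mathbb{Z}^2$-edges joining the inside of $\gamma_e\cup\{e\}$ to the outside are the edges of that cycle, and all of them except $e$ lie in $T$. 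So every dual edge has a finite side, whence every component of $T^{\ast}$ — and correspondingly $T$ itself — is one-ended almost surely. This is the place where dimension $2$ is essential.

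Now fix $i$ and set $\ell=\lceil i/2\rceil$. Define a mass transport on lattice edges: an edge $e$ with $D_e\ge i$ sends unit mass to each lattice edge incident to one of the $2\ell$ vertices $x_0,\dots,x_{\ell-1},\,x_{D_e-\ell+1},\dots,x_{D_e}$ near the two ends of $\gamma_e$; an edge with $D_e<i$ sends nothing. Each vertex of $\mathbb{Z}^2$ meets four edges, so every lattice edge emits total mass at most $4\cdot 2\ell\le 8i$, and it emits nothing unless $D_e\ge i$; hence a fixed edge emits in expectation at most $8i\,p_i$. By the Mass-Transport Principle this equals the expected mass \emph{received} by a fixed edge, so it suffices to prove that a fixed edge receives mass at least $1$ in expectation (and it would be cleanest to show it receives positive mass surely). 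Note that the constant $8=4\times 2$ — lattice degree times the two ends of $\gamma_e$ — is exactly what produces the bound $\frac{1}{8i}$.

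This receiver lower bound is the main obstacle: one must show that every lattice edge $e_0$ is incident to one of the $2\ell$ terminal vertices of $\gamma_e$ for some $e$ with $D_e\ge i$. Here one-endedness does the work. Starting at an endpoint $a$ of $e_0$ and moving a bounded number of steps away from the unique end of $T$, one isolates a finite subtree of $T$ hanging off $a$; if that subtree is large enough then, by the isoperimetric inequality in $\mathbb{Z}^2$ (a lattice cycle of length $L$ encloses at most $(L/4)^2$ faces), its boundary supplies a lattice edge $e=\{u,v\}$ whose tree-route $\gamma_e$ has length $\ge i$ and runs through the neighbourhood of $a$. Making this deterministic — selecting the correct finite side and verifying $D_e\ge i$ rather than merely that $R_e$ is large — is the delicate point. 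If a clean deterministic version resists, one can instead extract the weaker inequality $\Ex[\text{received mass}]\ge 1$ from an auxiliary transport sending mass from the finite sides of tree-edges out to their bounding lattice edges; the same isoperimetric estimate then gives $\Ex[\area(R_{e_0});\,e_0\notin T]=\infty$, hence at least $\Ex D=\infty$.
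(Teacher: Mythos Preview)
First, note that the paper does not give a proof of this theorem: it is quoted as background, with the specific-model case attributed to \cite{peres} and the general automorphism-invariant case to \cite{lyons}, Exercise~4.48. So there is no in-paper argument to compare against, and your write-up has to stand on its own.

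As a framework, Mass-Transport together with the planar-duality route to one-endedness is indeed the standard way this result is proved. But what you have written is an outline with acknowledged gaps, and some of the gaps are more serious than you indicate.

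The main problem is a mismatch between your transport and your receiver heuristic. You send mass from an edge $e$ (with $D_e\ge i$) to the lattice edges touching the first and last $\ell\approx i/2$ vertices of $\gamma_e$. Your sketch for the receiver bound, however, argues that for a given vertex $a$ one can find a long $\gamma_e$ that \emph{passes through} a bounded neighbourhood of $a$ --- i.e.\ that $a$ sits near the \emph{peak} of $\gamma_e$ in the one-ended picture, not within $\ell$ of an endpoint. So even if that geometric step were made rigorous, it would not show that $e_0$ receives mass under \emph{your} transport. The proofs in the literature use a different transport (e.g.\ sending mass to the peak vertex of $\gamma_e$, or a face-based transport on the dual tree $T^*$), and extracting the exact constant $\tfrac{1}{8}$ requires matching the bookkeeping to that choice.

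Two smaller gaps. First, the step ``every component of $T^*$ is one-ended, and correspondingly $T$ itself is one-ended'' is not justified: a two-ended $T$ would yield a $T^*$ with two one-ended components, which is consistent with everything you proved about $T^*$. One-endedness of automorphism-invariant spanning trees of $\mathbb{Z}^2$ is true, but it needs its own (MTP-based) argument; it does not drop out of the duality picture alone. Second, your fallback for $\Ex D=\infty$ ends with ``$\Ex[\area(R_{e_0})]=\infty$, hence $\Ex D=\infty$'', but isoperimetry only gives $D_e+1\ge 4\sqrt{|R_e|}$, so you would need $\Ex\sqrt{|R_e|}=\infty$, which does not follow from $\Ex|R_e|=\infty$.
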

A relation between finite models and infinite invariant models is provided by  local weak convergence, discussed briefly in section \ref{sec:LWC}.

The proof of Theorem \ref{T:peres} exploits symmetries of the lattice which clearly 
are not directly applicable in the Poisson model.
So what is the analog of Theorem \ref{T:peres} in the rate-$1$ Poisson model on the plane?
Here {\em invariant} means the distribution of the network  is invariant under the Euclidean group.
We would like to consider
\begin{equation}
\rho(r) := \mbox{ mean route length between two Poisson points at distance } r .
\label{def:rho}
\end{equation}  
As noted in section \ref{sec:MST}, the MST (minimum spanning tree) provides a model
 in which $\rho(r) < \infty$ for small $r$.  
It seems natural to conjecture  that there exists a constant $r_0 <  \infty $ such that, 
for all invariant tree networks over the rate-$1$ Poisson process, 
$\rho(r) = \infty$ for a.a. $r \ge r_0$.
To avoid possible very artificial examples (see section \ref{sec:counter})
we actually prove a slightly weaker assertion, 
by considering instead the route-length $D_r$ between Poisson points at distance {\em at most} $r$.

To be precise about the meaning of {\em tree-network}, we allow Steiner points 
(junctions, envisaging road networks) as vertices in addition to the given Poisson points. 
And we take edges to be line segments between vertices.  
The {\em tree} property is that there are no circuits.

\begin{Theorem}
\label{T1}
There exist constants $r_0 <  \infty $ and $\beta > 0$ such that, in
every invariant tree-network connecting the points of a Poisson point process of rate $1$ in the infinite plane,
 for $r \ge r_0$ 
\[ \Pr(D_r > d) \ge \beta r/d, \ r \le d < \infty \]
and so $\Ex D_r = \infty$ for $r \ge r_0$.
\end{Theorem}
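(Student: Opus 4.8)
The plan is to transfer the problem, through Palm calculus, into a question about how a typical pair of nearby Poisson points sits inside the tree once the tree is cut along a large circle, and then to prove the resulting estimate by an invariance argument in the spirit of \cite{peres}.

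\emph{Step 1: a geometric cut lemma and a first reduction.}
Work under the Palm distribution of the stationary point process of ordered pairs $(X,Y)$ of Poisson points with $|X-Y|\le r$, so that $D_r$ is the length of the unique arc of $\mathcal{T}$ joining $X$ to $Y$. Fix $d\ge r$ and let $B$ be the closed disk $B(X,d)$. Since $B$ is convex, any edge-segment of $\mathcal{T}$ with both endpoints in $B$ lies in $B$; hence every connected component of $\mathcal{T}\cap B$ must meet $\partial B$ (otherwise it would be a bounded connected component of the connected network $\mathcal{T}$), and, because $\mathcal{T}$ has no circuits, for two points in one such component the $\mathcal{T}$-arc between them stays inside that component. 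Consequently, if $X$ and $Y$ lie in different components of $\mathcal{T}\cap B(X,d)$ then the arc from $X$ to $Y$ must leave $B(X,d)$, so $D_r> d$. Thus it is enough to show
\[
\Pr\bigl(X,Y \text{ in different components of } \mathcal{T}\cap B(X,d)\bigr)\ \ge\ \beta\, r/d ,\qquad d\ge r .
\]
Equivalently, writing $C_d$ for the component of $\mathcal{T}\cap B(X,d)$ containing $X$ and $N_d=\#\{Y\in\mathcal{P}:|Y-X|\le r,\ Y\notin C_d\}$ for the number of near neighbours of $X$ cut off at scale $d$, it suffices that $\Ex N_d\ge\beta' r^3/d$. (Incidentally, any invariant tree spanning a rate-one Poisson process automatically has length intensity bounded below, since a tree on $n$ such points has length $\Omega(n)$; this guarantees $\mathcal{T}$ is not too sparse.)

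\emph{Step 2: reduction to a scale-$d$ renewal estimate.}
Take $r_0$ large enough that the Gilbert graph $G_r$ (join Poisson points at distance $\le r$) is deep in its supercritical phase, so its giant cluster has density near $1$ with chemical distance comparable to Euclidean distance. If $\Ex N_d$ were much smaller than $r^3/d$ for some $d$, then almost every $G_r$-edge inside a window of size $\sim d$ would have both endpoints in the \emph{same} component of $\mathcal{T}\cap B(\,\cdot\,,d)$; since removing a sparse invariant set of edges cannot disconnect the supercritical giant, a single component $C^\ast$ of $\mathcal{T}\cap B(X,d)$ would then swallow almost all Poisson points of $B(X,d)$. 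So the content of the theorem is concentrated in the case of one dominant component, where — applying the cut lemma at all intermediate scales — the merging scale $M_{XY}=\inf\{s:X,Y \text{ lie in one component of } \mathcal{T}\cap B(X,s)\}$ satisfies $D_r\ge M_{XY}$, and one must prove $\Pr(M_{XY}>d)\ge\beta r/d$. The natural route is a doubling estimate: for $r\le s\le d$,
\[
\Pr\bigl(M_{XY}>2s\ \big|\ M_{XY}>s\bigr)\ \ge\ \tfrac12 ,
\]
which, iterated over the $\log_2(d/r)$ dyadic scales in $[r,d]$ together with a uniform lower bound $\Pr(M_{XY}>r)>0$, yields exactly $\Pr(M_{XY}>d)\ge\beta r/d$.

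\emph{Step 3: the invariance input, and the main obstacle.}
The doubling estimate says that, conditionally on the branches of $\mathcal{T}$ through $X$ and $Y$ not yet having merged by scale $s$, they are at least as likely as not to survive unmerged up to scale $2s$; this must be extracted from invariance, because it fails for ``space-filling'' trees (for which $D_r$ would be bounded in terms of $r$ alone) and such trees are precisely the ones excluded by Euclidean invariance. I would establish it by a mass-transport argument: cutting $\mathcal{T}$ by a large box $Q$ of side $\sim s$, one uses that every component of $\mathcal{T}\cap Q$ is $\mathcal{T}$-geodesically convex and meets $\partial Q$, together with an isoperimetric/Euler bookkeeping for the way these components partition the $\sim s^2$ Poisson points of $Q$, to bound below the number of ``long chords'' — near pairs in $Q$ separated inside $Q$ — by a quantity of order $s$; transporting this per unit area and comparing the counts at scales $s$ and $2s$ gives the doubling. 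The trichotomy on the number of ends of $\mathcal{T}$ ($1$, $2$ or $\infty$ for an invariant tree) is treated in parallel, the one- and two-ended cases being the substantive ones. The crux — and where the continuum analog of the lattice symmetrization of \cite{peres} is really needed — is the uniform-in-scale lower bound on the number of long chords created when a single planar invariant subtree, geodesically convex and spanning a rate-one Poisson process in a disk of radius $\sim d$, is cut by a concentric disk of radius $\sim d/2$; obtaining this with no a priori upper bound on the total length of $\mathcal{T}$, and uniformly in $d$, is the main difficulty.
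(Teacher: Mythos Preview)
Your proposal is a strategy outline, not a proof: the central technical content is deferred to a step you yourself label ``the main obstacle.'' Two concrete gaps:

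First, the doubling estimate $\Pr(M_{XY}>2s\mid M_{XY}>s)\ge\tfrac12$ in Step~2 is asserted with no argument. A uniform lower bound $c$ on this conditional probability only yields $\Pr(D_r>d)\gtrsim (r/d)^{\log_2(1/c)}$; you need $c\ge\tfrac12$ to get exponent~$1$, and nothing in Step~3 explains why invariance forces this specific constant. The mass-transport sketch (``isoperimetric/Euler bookkeeping'' giving order~$s$ ``long chords'' per box of side~$s$) is not a route to the doubling estimate --- it is a restatement of the very bound $\Ex N_d\gtrsim r^3/d$ you are trying to prove, and if you could prove it directly the doubling would be superfluous. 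Second, the reduction in Step~2 (``if $\Ex N_d\ll r^3/d$ then one component of $\mathcal{T}\cap B(X,d)$ swallows almost all points'') invokes a supercritical-Gilbert-graph robustness claim under Palm conditioning on $(X,Y)$; this is heuristic and would need real work, especially since you then want to iterate it scale by scale conditionally on $\{M_{XY}>s\}$.

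The paper's argument is entirely different and sidesteps all of this. It never follows a fixed pair through scales. Instead it fixes one large square $[0,n]^2$, finds the \emph{centroid} $v_*$ of the spanning subtree of the Poisson points inside it, and bipartitions those points as $B\cup B^c$ by branches at $v_*$ with $|B|\asymp n^2$. Every $B$--$B^c$ pair routes through $v_*$. The technical work is a deterministic combinatorial argument (partition the square into $k^2$ subsquares of side $m$; a Peierls contour lemma on the $k\times k$ grid plus an elementary ``balanced subsquare'' lemma) showing that, regardless of the coloring induced by $B$, there are order $m^4 k$ many $B$--$B^c$ point pairs at Euclidean distance $\le\sqrt{2}\,m$ lying outside a small square around $v_*$, hence with route length $\gtrsim mk=n$. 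Setting $r\sim m$, $d\sim n$ and comparing this count with the trivial upper bound $\tfrac12 n^2\pi r^2\,\Pr(D_r\ge d)$ gives the theorem. There is no doubling, no ends trichotomy, and invariance enters only in the final averaging step.
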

So this is a continuum analog of Theorem \ref{T:peres}.
The proof in section \ref{sec:proofs} relies on the fact that a finite tree has a {\em centroid} 
from which each branch contains less than half the vertices; 
the route between two  vertices in different branches must go via the centroid, 
so the route length is lower bounded by the sum of distances to the centroid.
Consider the partition of a very large square into a large number of large subsquares.
If there are a non-negligible number of subsquares in which points from more than one branch have non-negligible relative frequency,
then the point-pairs within such subsquares provide the desired long routes.
Otherwise almost all subsquares have almost all points from the same branch, but therefore
(and this is the key intricate technical issue, Lemma \ref{L:green}) 
there must be some number of pairs of adjacent subsquares for which these are {\em different} branches, and so (by the easy Lemma \ref{L:Red}) some overlapping square has a substantial proportion of its points from different branches, which as before provide the desired long routes.

Our proof is technically elementary, albeit rather intricate, using only very basic facts from percolation theory.
It seems quite likely that some shorter proof could be found, using some more sophisticated percolation theory.

Remarks on analogous questions for general networks are given in section \ref{sec:rem}.
Note also that, for Theorem \ref{T1} to be interesting in the sense of generality, one would like to know that there are many different ways to construct invariant tree-networks over Poisson points, and we discuss this  in section \ref{sec:LWC}.

\section{Proofs}
\label{sec:proofs}

\subsection{Technical lemmas}
Here we give two lemmas.
The first, which is elementary,  will enable reduction to a lattice percolation setting, and the second is the key technical ingredient we need in that setting.
To aid intuition we state these in terms of colorings, though with different  interpretations in the two lemmas, and it is {\bf not} the graph-theoretic {\em coloring} notion in which adjacent vertices must have different colors.

Fix a large integer $m$.
Call a configuration of points in general position in the continuum $m \times m$ square $S_1 = [0,m]^2$ {\em balanced} if,
in each of the ten sub-rectangles 
$[(i-1)m/5,im/5] \times [0,1], 1 \le i \le 5$ and 
$[0,1] \times  [(i-1)m/5,im/5] , 1 \le i \le 5$, 
the number of points is between $0.98 m^2/5$ and $1.02 m^2/5$.
Make the analogous definition for the adjacent square $S_2 = [m,2m] \times [0,m]$.

\begin{Lemma}
\label{L:Red}
Suppose $S_1$ and $S_2$ each contain a balanced configuration of points. 
Consider  a   $\{$blue, red$\}$ coloring of the points in $S_1 \cup S_2$, and suppose that neither \\
(a) $S_1$ and $S_2$ both contain less than $0.1 m^2$ blue points\\
nor (b) $S_1$ and $S_2$ both contain more than $0.88 m^2$ blue points\\
is true.
Then the number of blue-red  point pairs at distance at most $2^{1/2} m$ apart is at least $0.088m^4$.
\end{Lemma}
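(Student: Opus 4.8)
The plan is to produce the required $0.088m^4=(0.1)(0.88)m^4$ bichromatic (blue--red) pairs from one of two sources: pairs with both endpoints in a single square $S_i$, which automatically lie at distance at most the diameter $\sqrt2\,m$; or ``bridge'' pairs joining the bottom strip $A_1:=[0,m]\times[0,1]$ of $S_1$ to the left strip $B_2:=[m,m+1]\times[0,m]$ of $S_2$, the latter running along the common edge $\{x=m\}$. Throughout, $b_i$ denotes the number of blue points in $S_i$. Balancedness gives that each of the twenty sub-rectangles contains between $0.196m^2$ and $0.204m^2$ points, and hence (each being a disjoint union of five of them) that $A_1,B_1,A_2,B_2$ each contain at least $0.98m^2$ points.

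First I would dispose of the case where some $b_i$ is non-extreme, say $0.1m^2\le b_1\le0.88m^2$. Since $A_1\subseteq S_1$, the number of red points in $S_1$ is at least the number of red points in $A_1$, which is $|A_1|$ minus the number of blue points in $A_1$, hence at least $|A_1|-b_1\ge0.98m^2-b_1$. So the number of bichromatic pairs inside $S_1$ (all within distance $\sqrt2\,m$) is at least $b_1(0.98m^2-b_1)$; this concave function of $b_1$ equals $0.088m^4$ at both endpoints $b_1=0.1m^2$ and $b_1=0.88m^2$, hence is $\ge0.088m^4$ on the whole interval, and we are done. So from now on each $b_i$ is either below $0.1m^2$ or above $0.88m^2$, and since neither hypothesis (a) nor (b) holds, one of them is below and the other above; say $b_1<0.1m^2$ and $b_2>0.88m^2$ (the opposite case is symmetric: then $B_2$ has more than $0.88m^2$ red points, and one splits instead on the blue count of $A_1$). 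Then $A_1$ contains more than $|A_1|-b_1>0.88m^2$ red points.

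Next I would split on the number of blue points in $B_2$. If it is at most $0.88m^2$, then $B_2$ -- hence $S_2$ -- contains at least $|B_2|-0.88m^2\ge0.1m^2$ red points, so the number of bichromatic pairs inside $S_2$ is at least $b_2\cdot0.1m^2>0.088m^4$, and we are done. Otherwise $B_2$ contains more than $0.88m^2$ blue points, and I turn to the bridge: the number of pairs consisting of a red point of $A_1$ and a blue point of $B_2$ is more than $(0.88m^2)^2$, all of them bichromatic, and it only remains to show that few of them are at distance exceeding $\sqrt2\,m$.

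The hard -- or at any rate the one genuinely geometric -- step is this last estimate. For $p\in A_1$ (so $p_y\le1$) and $q\in B_2$ (so $m\le q_x\le m+1$) one has $q_x-p_x\ge0$ and $|q_y-p_y|\le m$; if $\norm{p-q}^2>2m^2$ then $(q_x-p_x)^2>2m^2-m^2=m^2$ forces $q_x-p_x>m$, hence $p_x<1$, and then $(q_y-p_y)^2>2m^2-(m+1)^2=m^2-2m-1$ forces $q_y>\sqrt{m^2-2m-1}\ge4m/5$ (true once $m\ge7$). Thus every such ``long'' pair has $p$ in the first bottom sub-rectangle $[0,m/5]\times[0,1]$ of $S_1$ and $q$ in the fifth left sub-rectangle $[m,m+1]\times[4m/5,m]$ of $S_2$, so by balancedness there are fewer than $(0.204m^2)^2<0.042m^4$ of them. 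The bridge therefore supplies more than $(0.88m^2)^2-0.042m^4>0.088m^4$ bichromatic close pairs, which finishes the proof. This long-pair bound is the only place where the fine part of ``balanced'' -- control of the extreme sub-rectangles, not merely the whole strips -- is used; beyond it the difficulty is purely bookkeeping, checking that the chain of constants ($0.98-0.88=0.1$, $(0.1)(0.88)=0.088$, $(0.88)^2-0.042>0.088$) closes, the numbers having evidently been chosen so that the within-square bound is exactly tight while the bridge bound carries slack.
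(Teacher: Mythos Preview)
Your argument is correct, but in the extreme case $b_1<0.1m^2$, $b_2>0.88m^2$ it takes a genuinely different route from the paper's. The paper uses a sliding-window (discrete intermediate-value) argument: look at the six translated $m\times m$ squares $[im/5,\,m+im/5]\times[0,m]$, $i=0,\ldots,5$, interpolating between $S_1$ and $S_2$. Each step swaps one vertical width-$m/5$ strip for another, so the blue count can rise by at most $1.02m^2/5<0.78m^2$ per step; hence some intermediate square has blue count in $[0.1m^2,0.88m^2]$, and the first case applies to it verbatim. This uses only the \emph{upper} bound on points per strip and no geometry beyond ``an $m\times m$ square has diameter $\sqrt2\,m$''; your bridge construction instead uses the \emph{lower} bound on points in the extremal sub-rectangles, together with an explicit distance estimate for pairs in $A_1\times B_2$.

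One caveat: you have read the sub-rectangles in the definition of \emph{balanced} literally as $[(i-1)m/5,im/5]\times[0,1]$ and $[0,1]\times[(i-1)m/5,im/5]$, but the ``$[0,1]$'' is a typo for ``$[0,m]$'' --- a strip of area $m/5$ cannot hold $\sim m^2/5$ rate-$1$ Poisson points, and the paper's sliding argument visibly needs the full-height vertical strips to bound the per-step change. Under the intended definition your thin strips $A_1$, $B_2$ are no longer unions of the controlled sub-rectangles, so the bridge argument does not go through as written; conversely, the paper's sliding step does not go through under the literal (typo'd) definition. So each proof matches exactly one reading of \emph{balanced}, and the paper's is the one consistent with the Poisson application.
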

Note we are counting {\em all} such pairs, not asking for a matching where a point can be in only one pair.

\begin{proof}
First, if either $S_1$ or $S_2$ contains between $0.1 m^2$  and $0.88 m^2$ blue points, say $y$ blue points,
then (from the definition of {\em balanced}) there are at least 
$0.98 m^2 - y$ red points, and so at least $y(0.98 m^2 - y) \ge
0.1 m^2 \times 0.88 m^2$ blue-red pairs within that square. Such a pair is at most $2^{1/2}m$ apart.
The only remaining case is w.l.o.g where 
 $S_1$ contains less than $0.1 m^2$ blue points, and
 $S_2$ contains more than $0.88 m^2$ blue points.
In this case, consider the successive translated squares $[im/5,m+im/5] \times [0,m],  i = 0, 1, 2, \ldots, 5$.
At each step the number of blue points can increase by at most $1.02 m^2/5$,
so in at least one of the translated squares there are between $0.1 m^2$  and $0.88 m^2$ blue points, and the result follows as in the first case.
\end{proof}

\newpage

For our key technical lemma, fix a large integer $k$ and consider the $k \times k$ grid graph with vertices 
$G_k = \{0,1,,\ldots, k-1\} \times  \{0,1,,\ldots, k-1\}$.
\begin{Lemma}
\label{L:green}
Given an arbitrary subset $\xi^k$ of $G_k$, let $c(\xi^k)$ be the minimum, 
over all $\{$green-yellow$\}$ colorings of $G_k$ with at least  $k^2/4$ vertices of each color, 
of the number of green-yellow adjacent pairs where neither vertex is in $\xi^k$.
Then there exists $q>0$ such that, taking $\Xi^k$ 
to be the random subset in which each vertex is present independently with probability $q$,
\[ \Pr( c(\Xi^k) <  k/400) \to 0 \mbox{ as } k \to \infty  . \]
\end{Lemma}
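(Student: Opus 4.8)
The plan is to show that with high probability, the random sparse set $\Xi^k$ cannot "block" all the green-yellow interfaces: any balanced coloring must leave an interface of size $\Omega(k)$ avoiding $\Xi^k$. The starting point is the deterministic fact that a balanced $\{$green, yellow$\}$ coloring of $G_k$ has a large \emph{total} interface. Indeed, if each color class has at least $k^2/4$ vertices, then by a discrete isoperimetric inequality on the grid (e.g.\ applied row-by-row and column-by-column: in each row where both colors appear there is at least one interface edge, and summing the "minority count" argument shows the edge boundary of the smaller color class is $\Omega(k)$ — more simply, the number of monochromatic rows of each color plus monochromatic columns of each color is constrained, forcing $\Omega(k)$ interface edges). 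So for some absolute constant $c_0>0$, every balanced coloring has at least $c_0 k$ green-yellow adjacent pairs. I would fix $c_0$ carefully so that the target $k/400$ leaves room for the argument below.

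Next, fix $q>0$ small (to be chosen). For a \emph{single} balanced coloring $\chi$ with interface edge set $E(\chi)$, $|E(\chi)|\ge c_0 k$, the probability (over $\Xi^k$) that \emph{every} edge of $E(\chi)$ has at least one endpoint in $\Xi^k$ is a monotone decreasing function of the structure of $E(\chi)$; crucially, the interface edges of a $2$-coloring of the grid form a collection of disjoint cycles / paths in the dual, so $E(\chi)$ contains a matching of size $\ge |E(\chi)|/3 \ge c_0 k /3$ (each edge shares a vertex with at most two others along the dual curve). Along a matching, the events "this edge is killed" (one of its two endpoints is in $\Xi^k$) are independent and each has probability at most $2q - q^2 < 2q$. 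Hence $\Pr(E(\chi)\text{ fully blocked}) \le (2q)^{c_0 k/3}$. That is exponentially small in $k$, but there are only $\le 4^{k^2}$... no — that is the wrong count, and this is the main obstacle.

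The main difficulty is exactly that a naive union bound over all $2^{k^2}$ colorings fails against an $\exp(-\Theta(k))$ bound. The fix is to union-bound not over colorings but over \emph{cheap interfaces}. Observe that $c(\Xi^k) < k/400$ means there is a balanced coloring whose $\Xi^k$-avoiding interface $F$ has $|F| < k/400$; and a $2$-coloring of the grid is determined (up to global swap) by its full interface, which is a disjoint union of dual cycles and boundary-to-boundary dual paths. So it suffices to bound the probability that there \emph{exists} a family of disjoint dual curves, cutting $G_k$ into two pieces each of size $\ge k^2/4$, using at most $k/400$ edges outside $\Xi^k$. Every such curve-family, by the isoperimetry above, has total length $\ge c_0 k$, so it uses at least $c_0 k - k/400$ edges \emph{inside} $\Xi^k$; in particular $\Xi^k$ must contain $\ge (c_0 - 1/400)k =: c_1 k$ endpoints lying on a connected-ish dual structure. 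I would enumerate such structures by a standard contour/Peierls count: the number of families of disjoint dual curves in $G_k$ with total length $\ell$ and anchored so as to be a valid balanced cut is at most $(Ck)\cdot 3^{\ell}$ (choose a few starting points on the curves, of which there are $O(k)$ candidates since curves must touch the boundary or be forced, then a walk with $\le 3$ choices per step). For such a family to be realizable we need $\ge \ell - k/400$ specified vertices present in $\Xi^k$, an event of probability $\le q^{\ell - k/400}$. Summing over $\ell \ge c_0 k$ gives $\sum_{\ell \ge c_0 k} (Ck) 3^{\ell} q^{\ell - k/400} = (Ck) q^{-k/400}\sum_{\ell\ge c_0 k}(3q)^{\ell}$, which for $q$ small enough that $3q<1$ and $(3q)^{c_0}q^{-1/400}<1$ is $o(1)$ as $k\to\infty$.

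So the recipe is: (1) prove $|E(\chi)|\ge c_0 k$ for every balanced $\chi$ via grid isoperimetry; (2) identify "cheap configurations" with short dual-curve families and set up the Peierls enumeration, paying attention that a balanced cut forces curves to reach the boundary (or enclose $\Omega(k^2)$ area, which also forces length $\Omega(k)$ — the same bound); (3) for each configuration of total length $\ell$, note realizability needs $\ell - k/400$ prescribed vertices in $\Xi^k$, contributing $q^{\ell-k/400}$; (4) choose $q$ so the geometric sum beats the $O(k)3^\ell$ enumeration; conclude $\Pr(c(\Xi^k)<k/400)\to 0$. I expect step (2)–(3) — making the contour enumeration rigorous while correctly handling the "balanced" (area) constraint and the fact that cheap edges need not be on the same curve as expensive ones — to be the technically delicate part; everything else is routine.
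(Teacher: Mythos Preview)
Your overall strategy---union-bound over interface configurations via a Peierls count---is in the right spirit, but there is a genuine gap at the enumeration step (2), and it is exactly the obstacle that forces the paper's proof into a two-case argument.

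You claim that the number of families of disjoint dual curves forming a balanced cut of total length $\ell$ is at most $(Ck)\cdot 3^{\ell}$. This is correct for a \emph{single connected} dual curve, but a balanced interface need not be connected: it can be a straight dual line of length $k$ together with $m$ isolated ``flipped'' vertices on the larger side, each contributing a dual $4$-cycle. For $m = k/4$ the total length is $\ell = 2k$, yet the number of such colorings is of order $k\binom{k^2}{k/4}$, which grows like $k^{\Theta(k)}$ and eventually dominates $9^{k} = 3^{2k}$. So the bound $(Ck)3^{\ell}$ is false. Worse, against this many configurations your per-configuration probability estimate (even the correct version via the matching, $(2q)^{\Theta(\ell)}$) is only $e^{-\Theta(k)}$, and $k^{\Theta(k)}e^{-\Theta(k)}\to\infty$. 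Hence the union bound as written diverges. (There is also a secondary slip: ``$\ell - k/400$ specified vertices present'' is not what ``all but $k/400$ edges are blocked'' means---an edge is blocked when \emph{either} endpoint lies in $\Xi^k$, and edges share endpoints---but this is repairable via the matching you already identified.)

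The paper's proof is designed precisely to get around this disconnected-interface issue. It first proves a \emph{uniform} property of $\Xi^k$: with high probability, every self-avoiding dual path of length $\ell(k)\sim\log k$ carries at least $\ell(k)/20$ edge-disjoint uncovered pairs (this is a legitimate Peierls estimate, since one is summing over $O(k^{2}3^{\ell(k)})$ single connected paths). Given that property, any balanced coloring whose interface has total ``long-path'' length $\ge k/10$ automatically yields $\ge k/400$ uncovered pairs. The remaining case---interface mostly made of short circuits---is exactly the disconnected scenario that breaks your count, and the paper handles it by a second, different percolation estimate: it shows that for small $q$ the probability a zero-cost dual circuit surrounds a given vertex is at most $1/20$, and then a first/second-moment argument over the $\Theta(k^{2})$ vertices trapped inside short circuits forces $\Omega(k^{2}/\log^{2}k)$ uncovered interface edges. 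So the missing idea in your proposal is this long/short dichotomy and the separate treatment of the many-small-circuits regime.
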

As motivation, in the proof of Theorem \ref{T1} we will apply this where the vertices represent large squares and the two colors indicate a relatively large or relatively small number 
of points in a given tree-branch in the square.
The proof of Lemma \ref{L:green} is in essence just the classical {\em Peierls contour} method \cite{meester}, but applied in two different ways.

\setlength{\unitlength}{0.14in}
\begin{figure}

\begin{picture}(30,15)(-5,0)

\put(0,0){\circle{0.3}}
\put(1,0){\circle{0.3}}
\put(2,0){\circle*{0.3}}
\put(3,0){\circle*{0.3}}
\put(4,0){\circle*{0.3}}
\put(5,0){\circle*{0.3}}
\put(6,0){\circle{0.3}}
\put(7,0){\circle{0.3}}
\put(8,0){\circle{0.3}}
\put(9,0){\circle*{0.3}}
\put(10,0){\circle*{0.3}}
\put(11,0){\circle*{0.3}}
\put(0,1){\circle{0.3}}
\put(1,1){\circle*{0.3}}
\put(2,1){\circle*{0.3}}
\put(3,1){\circle*{0.3}}
\put(4,1){\circle{0.3}}
\put(5,1){\circle{0.3}}
\put(6,1){\circle{0.3}}
\put(7,1){\circle{0.3}}
\put(8,1){\circle{0.3}}
\put(9,1){\circle*{0.3}}
\put(10,1){\circle*{0.3}}
\put(11,1){\circle*{0.3}}
\put(0,2){\circle*{0.3}}
\put(1,2){\circle*{0.3}}
\put(2,2){\circle{0.3}}
\put(3,2){\circle*{0.3}}
\put(4,2){\circle{0.3}}
\put(5,2){\circle{0.3}}
\put(6,2){\circle{0.3}}
\put(7,2){\circle{0.3}}
\put(8,2){\circle{0.3}}
\put(9,2){\circle{0.3}}
\put(10,2){\circle{0.3}}
\put(11,2){\circle*{0.3}}
\put(0,3){\circle*{0.3}}
\put(1,3){\circle*{0.3}}
\put(2,3){\circle{0.3}}
\put(3,3){\circle{0.3}}
\put(4,3){\circle*{0.3}}
\put(5,3){\circle*{0.3}}
\put(6,3){\circle*{0.3}}
\put(7,3){\circle*{0.3}}
\put(8,3){\circle{0.3}}
\put(9,3){\circle{0.3}}
\put(10,3){\circle{0.3}}
\put(11,3){\circle*{0.3}}
\put(0,4){\circle*{0.3}}
\put(1,4){\circle*{0.3}}
\put(2,4){\circle{0.3}}
\put(3,4){\circle{0.3}}
\put(4,4){\circle*{0.3}}
\put(5,4){\circle*{0.3}}
\put(6,4){\circle*{0.3}}
\put(7,4){\circle*{0.3}}
\put(8,4){\circle*{0.3}}
\put(9,4){\circle*{0.3}}
\put(10,4){\circle*{0.3}}
\put(11,4){\circle{0.3}}
\put(0,5){\circle{0.3}}
\put(1,5){\circle{0.3}}
\put(2,5){\circle{0.3}}
\put(3,5){\circle*{0.3}}
\put(4,5){\circle*{0.3}}
\put(5,5){\circle*{0.3}}
\put(6,5){\circle{0.3}}
\put(7,5){\circle{0.3}}
\put(8,5){\circle{0.3}}
\put(9,5){\circle*{0.3}}
\put(10,5){\circle*{0.3}}
\put(11,5){\circle{0.3}}
\put(0,6){\circle{0.3}}
\put(1,6){\circle{0.3}}
\put(2,6){\circle{0.3}}
\put(3,6){\circle{0.3}}
\put(4,6){\circle*{0.3}}
\put(5,6){\circle*{0.3}}
\put(6,6){\circle{0.3}}
\put(7,6){\circle*{0.3}}
\put(8,6){\circle*{0.3}}
\put(9,6){\circle*{0.3}}
\put(10,6){\circle*{0.3}}
\put(11,6){\circle{0.3}}
\put(0,7){\circle{0.3}}
\put(1,7){\circle{0.3}}
\put(2,7){\circle{0.3}}
\put(3,7){\circle{0.3}}
\put(4,7){\circle*{0.3}}
\put(5,7){\circle*{0.3}}
\put(6,7){\circle*{0.3}}
\put(7,7){\circle*{0.3}}
\put(8,7){\circle*{0.3}}
\put(9,7){\circle*{0.3}}
\put(10,7){\circle*{0.3}}
\put(11,7){\circle{0.3}}
\put(0,8){\circle*{0.3}}
\put(1,8){\circle*{0.3}}
\put(2,8){\circle{0.3}}
\put(3,8){\circle{0.3}}
\put(4,8){\circle*{0.3}}
\put(5,8){\circle*{0.3}}
\put(6,8){\circle*{0.3}}
\put(7,8){\circle{0.3}}
\put(8,8){\circle{0.3}}
\put(9,8){\circle{0.3}}
\put(10,8){\circle{0.3}}
\put(11,8){\circle{0.3}}
\put(0,9){\circle*{0.3}}
\put(1,9){\circle*{0.3}}
\put(2,9){\circle*{0.3}}
\put(3,9){\circle{0.3}}
\put(4,9){\circle{0.3}}
\put(5,9){\circle{0.3}}
\put(6,9){\circle{0.3}}
\put(7,9){\circle{0.3}}
\put(8,9){\circle{0.3}}
\put(9,9){\circle{0.3}}
\put(10,9){\circle{0.3}}
\put(11,9){\circle*{0.3}}
\put(0,10){\circle*{0.3}}
\put(1,10){\circle*{0.3}}
\put(2,10){\circle{0.3}}
\put(3,10){\circle{0.3}}
\put(4,10){\circle{0.3}}
\put(5,10){\circle{0.3}}
\put(6,10){\circle{0.3}}
\put(7,10){\circle{0.3}}
\put(8,10){\circle*{0.3}}
\put(9,10){\circle{0.3}}
\put(10,10){\circle*{0.3}}
\put(11,10){\circle*{0.3}}
\put(0,11){\circle*{0.3}}
\put(1,11){\circle*{0.3}}
\put(2,11){\circle{0.3}}
\put(3,11){\circle{0.3}}
\put(4,11){\circle{0.3}}
\put(5,11){\circle{0.3}}
\put(6,11){\circle*{0.3}}
\put(7,11){\circle*{0.3}}
\put(8,11){\circle*{0.3}}
\put(9,11){\circle{0.3}}
\put(10,11){\circle*{0.3}}
\put(11,11){\circle*{0.3}}
\put(-0.5,1.5){\line(1,0){1}}
\put(0.5,1.5){\line(0,-1){1}}
\put(0.5,.5){\line(1,0){1}}
\put(1.5,0.5){\line(0,-1){1}}
\put(-0.5,4.5){\line(1,0){2}}
\put(1.5,4.5){\line(0,-1){3}}
\put(1.5,1.5){\line(1,0){1}}
\put(2.5,1.5){\line(0,1){1}}
\put(2.5,2.5){\line(1,0){1}}
\put(3.5,2.5){\line(0,-1){2}}
\put(3.5,0.5){\line(1,0){2}}
\put(5.5,0.5){\line(0,-1){1}}
\put(8.5,-0.5){\line(0,1){2}}
\put(8.5,1.5){\line(1,0){2}}
\put(10.5,1.5){\line(0,1){2}}
\put(10.5,3.5){\line(1,0){1}}
\put(3.5,2.5){\line(1,0){4}}
\put(7.5,2.5){\line(0,1){1}}
\put(7.5,3.5){\line(1,0){3}}
\put(10.5,3.5){\line(0,1){4}}
\put(10.5,7.5){\line(-1,0){4}}
\put(6.5,7.5){\line(0,1){1}}
\put(6.5,8.5){\line(-1,0){3}}
\put(3.5,8.5){\line(0,-1){3}}
\put(3.5,5.5){\line(-1,0){1}}
\put(2.5,5.5){\line(0,-1){1}}
\put(2.5,4.5){\line(1,0){1}}
\put(3.5,4.5){\line(0,-1){2}}
\put(5.5,4.5){\line(1,0){3}}
\put(8.5,4.5){\line(0,1){1}}
\put(8.5,5.5){\line(-1,0){2}}
\put(6.5,5.5){\line(0,1){1}}
\put(6.5,6.5){\line(-1,0){1}}
\put(5.5,6.5){\line(0,-1){2}}
\put(9.5,11.5){\line(0,-1){2}}
\put(9.5,9.5){\line(1,0){1}}
\put(10.5,9.5){\line(0,- 1){1}}
\put(10.5,8.5){\line(1,0){1}}
\put(-0.5,7.5){\line(1,0){2}}
\put(1.5,7.5){\line(0,1){1}}
\put(1.5,8.5){\line(1,0){1}}
\put(2.5,8.5){\line(0,1){1}}
\put(2.5,9.5){\line(-1,0){1}}
\put(1.5,9.5){\line(0,1){2}}
\put(5.5,11.5){\line(0,-1){1}}
\put(5.5,10.5){\line(1,0){2}}
\put(7.5,10.5){\line(0,-1){1}}
\put(7.5,9.5){\line(1,0){1}}
\put(8.5,9.5){\line(0,1){2}}
\put(19,0){\circle*{0.3}}
\put(20,0){\circle*{0.3}}
\put(21,0){\circle*{0.3}}
\put(22,0){\circle{0.3}}
\put(24,0){\circle{0.3}}
\put(25,0){\circle*{0.3}}

\put(17,1){\circle*{0.3}}
\put(18,1){\circle*{0.3}}
\put(19,1){\circle*{0.3}}
\put(20,1){\circle{0.3}}
\put(21,1){\circle{0.3}}
\put(22,1){\circle{0.3}}
\put(24,1){\circle{0.3}}
\put(25,1){\circle*{0.3}}
\put(26,1){\circle*{0.3}}
\put(27,1){\circle*{0.3}}

\put(17,2){\circle*{0.3}}
\put(18,2){\circle{0.3}}
\put(19,2){\circle*{0.3}}
\put(20,2){\circle{0.3}}
\put(21,2){\circle{0.3}}
\put(22,2){\circle{0.3}}
\put(23,2){\circle{0.3}}
\put(24,2){\circle{0.3}}
\put(25,2){\circle{0.3}}
\put(26,2){\circle{0.3}}
\put(27,2){\circle*{0.3}}

\put(17,3){\circle*{0.3}}
\put(18,3){\circle{0.3}}
\put(19,3){\circle{0.3}}
\put(20,3){\circle*{0.3}}
\put(21,3){\circle*{0.3}}
\put(22,3){\circle*{0.3}}
\put(23,3){\circle*{0.3}}
\put(24,3){\circle{0.3}}
\put(25,3){\circle{0.3}}
\put(26,3){\circle{0.3}}
\put(27,3){\circle*{0.3}}

\put(16,4){\circle*{0.3}}
\put(17,4){\circle*{0.3}}
\put(18,4){\circle{0.3}}
\put(19,4){\circle{0.3}}
\put(20,4){\circle*{0.3}}
\put(23,4){\circle*{0.3}}
\put(24,4){\circle*{0.3}}
\put(25,4){\circle*{0.3}}
\put(26,4){\circle*{0.3}}
\put(27,4){\circle{0.3}}

\put(16,5){\circle{0.3}}
\put(17,5){\circle{0.3}}
\put(18,5){\circle{0.3}}
\put(19,5){\circle*{0.3}}
\put(20,5){\circle*{0.3}}
\put(26,5){\circle*{0.3}}
\put(27,5){\circle{0.3}}

\put(18,6){\circle{0.3}}
\put(19,6){\circle{0.3}}
\put(20,6){\circle*{0.3}}
\put(26,6){\circle*{0.3}}
\put(27,6){\circle{0.3}}

\put(16,7){\circle{0.3}}
\put(17,7){\circle{0.3}}
\put(18,7){\circle{0.3}}
\put(19,7){\circle{0.3}}
\put(20,7){\circle*{0.3}}
\put(22,7){\circle*{0.3}}
\put(23,7){\circle*{0.3}}
\put(24,7){\circle*{0.3}}
\put(25,7){\circle*{0.3}}
\put(26,7){\circle*{0.3}}
\put(27,7){\circle{0.3}}

\put(16,8){\circle*{0.3}}
\put(17,8){\circle*{0.3}}
\put(18,8){\circle{0.3}}
\put(19,8){\circle{0.3}}
\put(20,8){\circle*{0.3}}
\put(21,8){\circle*{0.3}}
\put(22,8){\circle*{0.3}}
\put(23,8){\circle{0.3}}
\put(24,8){\circle{0.3}}
\put(25,8){\circle{0.3}}
\put(26,8){\circle{0.3}}
\put(27,8){\circle{0.3}}

\put(17,9){\circle*{0.3}}
\put(18,9){\circle*{0.3}}
\put(19,9){\circle{0.3}}
\put(20,9){\circle{0.3}}
\put(21,9){\circle{0.3}}
\put(22,9){\circle{0.3}}
\put(23,9){\circle{0.3}}
\put(24,9){\circle{0.3}}
\put(25,9){\circle{0.3}}
\put(26,9){\circle{0.3}}
\put(27,9){\circle*{0.3}}

\put(17,10){\circle*{0.3}}
\put(18,10){\circle{0.3}}
\put(19,10){\circle{0.3}}
\put(21,10){\circle{0.3}}
\put(22,10){\circle{0.3}}
\put(23,10){\circle{0.3}}
\put(24,10){\circle*{0.3}}
\put(25,10){\circle{0.3}}
\put(26,10){\circle*{0.3}}
\put(27,10){\circle*{0.3}}

\put(17,11){\circle*{0.3}}
\put(18,11){\circle{0.3}}

\put(21,11){\circle{0.3}}
\put(22,11){\circle*{0.3}}
\put(23,11){\circle*{0.3}}
\put(24,11){\circle*{0.3}}
\put(25,11){\circle{0.3}}
\put(26,11){\circle*{0.3}}
\put(15.5,4.5){\line(1,0){2}}
\put(17.5,4.5){\line(0,-1){3}}
\put(17.5,1.5){\line(1,0){1}}
\put(18.5,1.5){\line(0,1){1}}
\put(18.5,2.5){\line(1,0){1}}
\put(19.5,2.5){\line(0,-1){2}}
\put(19.5,0.5){\line(1,0){2}}
\put(21.5,0.5){\line(0,-1){1}}
\put(24.5,-0.5){\line(0,1){2}}
\put(24.5,1.5){\line(1,0){2}}
\put(26.5,1.5){\line(0,1){2}}
\put(26.5,3.5){\line(1,0){1}}
\put(19.5,2.5){\line(1,0){4}}
\put(23.5,2.5){\line(0,1){1}}
\put(23.5,3.5){\line(1,0){3}}
\put(26.5,3.5){\line(0,1){4}}
\put(26.5,7.5){\line(-1,0){4}}
\put(22.5,7.5){\line(0,1){1}}
\put(22.5,8.5){\line(-1,0){3}}
\put(19.5,8.5){\line(0,-1){3}}
\put(19.5,5.5){\line(-1,0){1}}
\put(18.5,5.5){\line(0,-1){1}}
\put(18.5,4.5){\line(1,0){1}}
\put(19.5,4.5){\line(0,-1){2}}
\put(25.5,11.5){\line(0,-1){2}}
\put(25.5,9.5){\line(1,0){1}}
\put(26.5,9.5){\line(0,- 1){1}}
\put(26.5,8.5){\line(1,0){1}}
\put(15.5,7.5){\line(1,0){2}}
\put(17.5,7.5){\line(0,1){1}}
\put(17.5,8.5){\line(1,0){1}}
\put(18.5,8.5){\line(0,1){1}}
\put(18.5,9.5){\line(-1,0){1}}
\put(17.5,9.5){\line(0,1){2}}
\put(21.5,11.5){\line(0,-1){1}}
\put(21.5,10.5){\line(1,0){2}}
\put(23.5,10.5){\line(0,-1){1}}
\put(23.5,9.5){\line(1,0){1}}
\put(24.5,9.5){\line(0,1){2}}
\end{picture}
\caption{Maximal circuits and paths in the finite grid.}
\label{Fig:1}
\end{figure}
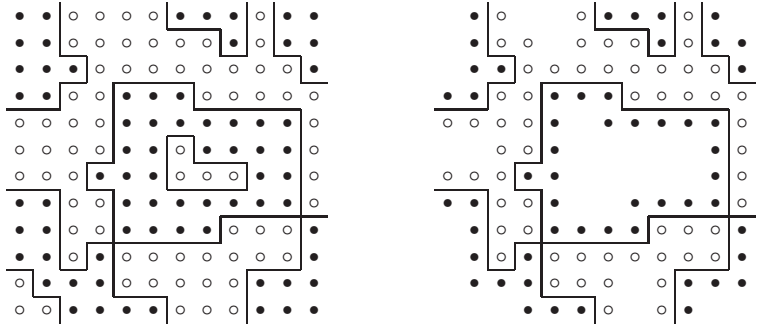

\begin{proof}
To recall basic percolation theory, in any coloring a green-yellow adjacent pair specifies an edge in a dual graph, 
and these edges form the boundaries of colored components.  
More precisely, as illustrated in Figure \ref{Fig:1}  (left), the set of such edges is a disjoint union of\\
(i) self-avoiding circuits within the $k \times k$ grid\\
(ii) self-avoiding paths starting and ending on the external dual boundary.\\
We write path* for ``path or circuit".
Fix $\ell > 4$ and consider a self-avoiding path* (in the dual graph)  $\pi$ of length $\ell$ in $G_k$.
Each edge separates some pair of vertices in $G_k$.
We can find a set $S_\pi$ of $\lfloor \ell/3\rfloor$ {\em disjoint} adjacent vertex pairs separated by some edge within $\pi$.
Consider the event $A_\pi$ that at most $\ell/20$ pairs within $S_\pi$ have neither end-vertex in $\Xi^k$.
This event has probability
\[ \Pr(A_\pi) = \Pr(\mathrm{Bin}(\lfloor \ell/3 \rfloor, (1-q)^2) \le \ell/20) . \]
The number of length-$\ell$  self-avoiding paths* $\pi$ is at most $4 k^2 3^{\ell - 1}$.
So the expected number of events $A_\pi$ that occur is at most
\begin{equation}
4 k^2 3^{\ell - 1} \times \Pr(\mathrm{Bin}(\lfloor \ell/3 \rfloor, (1-q)^2) \le \ell/20) .
\label{4k2}
\end{equation}
Setting $\ell(k) \sim \log k$, standard Binomial tail bounds imply that, for sufficiently small $q$, 
the quantity (\ref{4k2}) goes to $0$ as $k \to \infty$.
So we may assume
\begin{quote}
(*) For every self-avoiding path* $\pi$ of length $\ell(k)$ in the dual graph of $G_k$, there exist at least $\ell(k)/20$ 
disjoint adjacent vertex pairs separated by some edge within $\pi$ and with neither vertex in $\Xi^k$.
\end{quote}
Note this is a property of $\Xi^k$, not involving any coloring.

Now consider a green-yellow coloring  of $G_k$ with at least  $k^2/4$ vertices of each color, 
By an elementary argument, the length of the boundary within $G_k$ between colored regions, that is the sum of lengths 
of the paths* at (i,ii), is at least $k/2$.
Split that sum as $S_{long} + S_{short}$ according as the path* lengths are longer or shorter than $\ell(k)$.
If $S_{long} > k/10$ then,  by splitting these paths* into disjoint segments of length $\ell(k)$ as needed, 
property (*) easily implies existence of $k/400$ green-yellow adjacent pairs where neither vertex is in $\Xi^k$.

So it is enough to consider only colorings in which 
\begin{equation}
S_{long } \le k/10 \mbox{ and } S_{short} \ge k/2 - k/10 = 2k/5.
\label{long-short}
\end{equation}
Fix such a coloring, and consider the associated paths and circuits, as in Figure \ref{Fig:1}. 
Note that  a circuit (i) splits $G_k$ into an exterior and an {\em interior} region.
Also a  path (ii), which by (\ref{long-short}) has length $\le k/10$,  splits $G_k$ into a well-defined larger and a smaller region, where 
(somewhat confusingly) we designate the smaller region meeting  the boundary of $G_k$ as the {\em interior} of the path. 
It easily follows from $S_{long } \le k/10$ that at most $k^2/200$ vertices are inside long paths.
Also, at most $4 k \log k$ vertices can be inside short paths, so for large $k$
\begin{equation}
\mbox{ at most $k^2/100$ vertices are inside paths.}
\label{path-int}
\end{equation}
Here {\em inside} means ``in the interior of".

A {\em maximal} circuit is one that is not contained inside another circuit or path, 
and a {\em maximal} path  is one that is not  inside  another path.
Figure \ref{Fig:1}  (right) shows the 5 maximal paths and the 1 maximal circuit in that example.
Note that, by definition, there is a single-color path\footnote{In this specific context the path may include diagonals.} immediately inside and a single-opposite-color path immediately outside each maximal path or circuit.
Moreover the colors of these immediately-inside paths are the same (say $\bullet$) for each component, because a path in $G$ between 
a vertex in each component must cross component boundaries an even number of times. 
Every vertex of color $\bullet$ at  distance at least $\log k$ from the sides of $G_k$ is either inside some path, or inside some circuit and therefore inside some maximal circuit.
By hypothesis there are at least $k^2/4$ vertices of each color, so using (\ref{path-int}) and considering maximal circuits we have shown that 
property (\ref{long-short}) implies that for large $k$
\[ \mbox{there exist circuits, each of length less than $\log k$, with disjoint interiors } \]
\begin{equation}
\mbox{ and containing a total of at least $k^2/5$ vertices.  }
\label{ecb2}
\end{equation}

So it is enough to consider only colorings with property (\ref{ecb2}).
To analyze this case we need to set up some notation. 
Write $\Xi^\infty$ for the random subset of the {\em infinite} square lattice $\Ints^2$ in which each vertex is present independently with probability $q$.
Define the {\em cost} of a dual circuit $C$ in $\Ints^2$ to be the number of edges for which neither adjacent vertex is in $\Xi^\infty$, and similarly for dual circuits in $G_k$ and $\Xi^k$.
Consider the event
\[ A_\infty^q :=  \mbox{some circuit in $\Ints^2$ around the origin has zero cost} . \]
By a simpler use of the  Peierls contour method used for (\ref{4k2}),
$
\Pr(A_\infty^q) \to 0 \mbox{ as } q \downarrow 0. 
$
So we can fix $q$ sufficiently small that
\begin{equation}
\Pr(A_\infty^q) \le \sfrac{1}{20} .
\label{A12}
\end{equation}
Consider a coloring of $G_k$ (depending on $\Xi^k$) satisfying (\ref{ecb2}): 
to complete the proof of Lemma \ref{L:green} it will suffice to show that 
\[ N_k:= \mbox{ number of green-yellow adjacent pairs in $G_k$ with neither vertex in $\Xi^k$} \]
satisfies
\begin{equation}
 \Pr(N_k < k/400) \to 0 \mbox{ as } k \to \infty  . 
 \label{Nk20}
 \end{equation}
Write $\CC_k$ for the set of circuits guaranteed by (\ref{ecb2}), and write $\GG_k$ for the union of their interior vertices.
For $v \in \GG_k$ write $C_k(v)$ for the circuit in $\CC_k$ containing $v$ and $\area (C_k(v))$ for its area ( = number of interior vertices).
Now
\begin{eqnarray*}
N_k& \geq &\sum_{C_k \in \CC_k} \cost(C_k) \\
&=& \sum_{v \in \GG_k} \frac{1}{\area(C_k(v))} \cost(C_k(v)) .
\end{eqnarray*}
Now, taking $\Xi^k$ as the restriction of $\Xi^\infty$, we have
  $\cost(C_k(v)) \ge 1_{A_k^c(v)}$
where $A_k(v)$ is the event that some circuit around $v$ in $\Xi^\infty$ with length $\le \log k$ has zero cost.
Note by (\ref{A12})
\begin{equation}
 \Pr(A_k(v))  \le \Pr(A_\infty^q) \le \sfrac{1}{20} 
 \label{AkA}
 \end{equation}
and write
\[ N_k \ge  \sum_{v \in \GG_k} \frac{1}{\area(C_k(v))} \ 1_{A_k^c(v)} .\]
Each $\area(C_k(v))$ is at most $\log^2 k$, so
\begin{equation}
 N_k \ge \frac{1}{\log^2 k} \left( |\GG_k| - \sum_{v \in \GG_k} \ 1_{A_k(v)}  \right) \ge  \frac{1}{\log^2 k} \left( |\GG_k| - \sum_{v \in G_k} \ 1_{A_k(v)}  \right)     .
 \label{Nk=}
 \end{equation}
 By (\ref{AkA})
 \[
 \Ex  \left( \sum_{v \in G_k} \ 1_{A_k(v)}  \right) \le \sfrac{k^2}{20} 
 \]
If $v_1$ and $v_2$ are more than $\log k$ apart, the events $A_k(v_1)$ and $A_k(v_2)$ are independent, so 
\[ \Var \left(  \sum_{v \in G_k} \ 1_{A_k(v)}  \right) = O( k^2 \log^2 k )
\]
and then Chebyshev's inequality gives
\begin{equation}
 \Pr \left(  \sum_{v \in G_k} \ 1_{A_k(v)}   > k^2/10 \right) \to 0 \mbox{ as } k \to \infty . 
 \label{Cheb}
 \end{equation}
By (\ref{ecb2}) we have
$|\GG_k| \ge k^2/5$, and 
combining with (\ref{Nk=}) we find
\[ \Pr(N_k < k^2/(10 \log^2 k))  \to 0 \mbox{ as } k \to \infty \]
which is stronger than the desired bound (\ref{Nk20}).

To check the logic of this argument, note that the event in (\ref{Cheb}) involves only $\Xi^\infty$.
The other inequalities are deterministic, and show that, outside event (\ref{Cheb}), for every coloring satisfying (\ref{ecb2})  
and for large $k$, we have $N_k \ge k^2/(10 \log^2 k)$.

\end{proof}

We actually need the following  modification of Lemma \ref{L:green}, to say that the same result holds if we insist that
we count only pairs outside an arbitrary  subsquare of side $0.001k$. 
\begin{Corollary}
\label{C:1}
Let  $\Xi^k$ be the random subset of $G_k$ in which each vertex is present independently with probability $q$.
Let $\Box_k$ be a subsquare of $G_k$ of side asymptotic to $0.001k$, dependent on $\Xi^k$.
Let $c^\prime(\Xi^k)$ be the minimum, 
over all $\{$green-yellow$\}$ colorings of $G_k$ with at least  $k^2/4$ vertices of each color, 
of the number of green-yellow adjacent pairs where neither vertex is in $\Xi^k$ or in $\Box_k$  .
Then there exist $q>0$ and $\alpha > 0$ such that 
\[ \Pr( c^\prime(\Xi^k) < \alpha k) \to 0 \mbox{ as } k \to \infty  . \]
\end{Corollary}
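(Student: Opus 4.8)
The plan is to re-run the proof of Lemma~\ref{L:green} with the square grid $G_k$ replaced by the ``holed grid'' $H_k := G_k \setminus \Box_k$, i.e.\ the graph obtained by deleting the vertices of $\Box_k$ together with their incident edges. What makes this legitimate is twofold: a green-yellow adjacent pair of $G_k$ with neither vertex in $\Xi^k$ or in $\Box_k$ is precisely a green-yellow edge of $H_k$ with neither endpoint in $\Xi^k$; and the restriction to $H_k$ of any $\{$green,yellow$\}$-coloring of $G_k$ having at least $k^2/4$ vertices of each color still has at least $k^2/4 - |\Box_k| \ge k^2/5$ vertices of each color, since $|\Box_k|\sim 10^{-6}k^2$. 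So it suffices to prove the analog of Lemma~\ref{L:green} with $G_k$ replaced by $H_k$ and the balance threshold relaxed from $k^2/4$ to $k^2/5$, uniformly in the (random, $\Xi^k$-dependent) choice of $\Box_k$.

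First I would check that the two probabilistic inputs transfer. Dual self-avoiding path*s of $H_k$ are of two kinds: those avoiding the single extra ``hole face'' of $H_k$, which are already path*s of $G_k$ counted in the estimate behind~(\ref{4k2}); and those passing once through the hole face, which split as two $G_k$-path*s joined through a dual edge of $\partial\Box_k$. In either case such a path* of length $\ell(k)$ contains a genuine $G_k$-path* of length $\gtrsim \ell(k)$ all of whose dual edges lie in $H_k$ (hence separate vertex pairs outside $\Box_k$), so property~(*) for $G_k$ at the scale $\ell(k)$ and, to be safe, at a comparable smaller scale still suffices and still holds with probability $\to 1$ for small $q$; note this formulation does not mention $\Box_k$, and alternatively one could simply union-bound property~(*) over the $O(k^3)$ admissible $\Box_k$, which is harmless since for small $q$ each failure probability is $o(k^{-3})$. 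The event of~(\ref{Cheb}) and the calibration~(\ref{A12}) of $q$ involve only $\Xi^\infty$ and are untouched by $\Box_k$. Let $\mathcal G_k$ be the intersection of these high-probability events; on $\mathcal G_k$ the remaining argument is deterministic and uniform over $\Box_k$ and the coloring.

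On $\mathcal G_k$ one argues as before. The isoperimetric input becomes: the within-$H_k$ boundary between the color classes of $\chi|_{H_k}$ has length at least $c_0 k$ for a universal $c_0>0$. This follows from the elementary bound used for Lemma~\ref{L:green} via a comparison coloring: extend $\chi|_{H_k}$ to $G_k$ by coloring $\Box_k$ monochromatically, getting a coloring of $G_k$ with at least $k^2/5$ of each color and hence within-$G_k$ boundary $\gtrsim k$; the within-$H_k$ boundary of $\chi|_{H_k}$ differs from this by at most the number of edges joining $\Box_k$ to $H_k$, which is $O(\sqrt{|\Box_k|}) = O(10^{-3}k)$ and so negligible. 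Splitting this boundary at the scale $\ell(k)$ into a long and a short part: if the long part has length $\gtrsim k$, property~(*) on $\mathcal G_k$ produces $\Omega(k)$ green-yellow $H_k$-edges with neither endpoint in $\Xi^k$, exactly as before. Otherwise the short part has length $\gtrsim k$ and the Peierls bookkeeping leading to~(\ref{ecb2}) goes through: only a small fraction of $k^2$ vertices lie inside short or long paths (the extra arcs ending on the hole boundary contribute only $O(k\log^2 k)$, since $\partial\Box_k$ carries only $O(k)$ dual edges), and, crucially, \emph{no} short circuit can surround $\Box_k$, since a dual circuit enclosing $\Box_k$ has length at least the perimeter $\Theta(k)\gg\log k$ of $\Box_k$; hence every short circuit of $H_k$ is an honest circuit of $\Ints^2$ with interior of at most $\log^2 k$ vertices, and one gets disjoint short circuits of total interior $\ge c'k^2$, $c'>0$. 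Feeding this into the $\area/\cost$ computation of~(\ref{Nk=}) verbatim — with $\cost(C_k(v)) \ge 1_{A_k^c(v)}$, the bound $\sum_v 1_{A_k(v)} \le k^2/10$ from $\mathcal G_k$, and the fact that the edges counted have both endpoints outside $\Box_k$ — yields $\ge c'k^2/(2\log^2 k)$ such edges, far more than $\alpha k$. Combining the two cases proves the Corollary for any sufficiently small $\alpha$.

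I expect the main obstacle to be precisely the control of what $\Box_k$ can ``hide'': a priori the coloring could concentrate $\Omega(k^2)$ of its color boundary inside $\Box_k$, so one must verify that deleting $\Box_k$ degrades neither the isoperimetric lower bound nor the ``most boundary comes in short circuits with bounded interiors'' dichotomy by more than an $o(1)$ amount. This is exactly what the comparison-coloring estimate (for the isoperimetry) and the inequality $\operatorname{perimeter}(\Box_k)\ll k$ (for ruling out short circuits around the hole, and for bounding the arcs and crossing edges near $\Box_k$) guarantee, given that $\Box_k$ has side only $0.001k$. Everything else is a rescaling of constants already present in the proof of Lemma~\ref{L:green}.
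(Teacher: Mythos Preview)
Your proposal is correct but takes a genuinely more elaborate route than the paper. The paper's argument is a two-line reduction: recolor all vertices of $\Box_k$ to a single color, observe that the new coloring still has close to $k^2/4$ vertices of each color, and apply Lemma~\ref{L:green} directly; any green-yellow adjacent pair produced by Lemma~\ref{L:green} in the recolored configuration either has both endpoints outside $\Box_k$ (and is then a green-yellow pair of the original coloring avoiding both $\Xi^k$ and $\Box_k$), or has one endpoint on $\partial\Box_k$, and there are only $\sim 0.004k$ such boundary edges. You instead delete $\Box_k$ and re-run the whole proof of Lemma~\ref{L:green} on the holed grid $H_k$, verifying step by step that property~(*), the isoperimetric input, and the short-circuit dichotomy all survive the hole --- indeed you invoke the paper's monochromatic-extension trick, but only as a sub-lemma for the isoperimetric step rather than as the entire argument.

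What each approach buys: the paper's black-box reduction is much shorter, but as written it hides a genuine numerical wrinkle --- the constant $k/400$ from Lemma~\ref{L:green} is smaller than the $\sim 0.004k$ boundary edges of $\Box_k$, so one must tacitly revisit the proof of Lemma~\ref{L:green} to sharpen the constant (this is presumably among the omitted details). Your approach makes explicit exactly where the hole could cause trouble; in particular your observation that no dual circuit of length $<\log k$ can enclose $\Box_k$ (since $\Box_k$ has perimeter of order $k$) is a real check that the paper's sketch suppresses. Both are sound; yours is closer to a complete proof, the paper's is closer to a proof outline.
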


\noindent
{\em Outline proof.}
Re-color the vertices in the small subsquare to become all the same color, and apply Lemma \ref{L:green} to the new configuration.
We omit details.

\subsection{Proof of Theorem \ref{T1}}
Take large integers $k$ and $m$, and set $n = km$.
Consider the  $n \times n$ square $[0,n]^2$ in the plane.
 Write $\Sigma_{m,k}$ for the index set of the natural partition of the square $[0,n]^2$ into $k^2$ 
 subsquares $\sigma$ of side $m$ -- call these the {\em natural} subsquares.
  The set $\Sigma_{m,k}$ is isomorphic to the $k \times k$ vertex grid $G_k$.
  In particular a subsquare $\Box$ of $G_k$, say with $s \times s$ vertices, 
  corresponds to a subsquare $\Box \hspace{-0.1in} + $ of the square $[0,n]^2$, with side $sm$, consisting of $s^2$ natural subsquares.
 
 Write $q_m$ for the probability that a realization of a rate-$1$ Poisson point process on a $m \times m$ square
 is not {\em balanced}, in the sense of Lemma \ref{L:Red}.  Clearly
 \begin{equation}
 q_m \to 0 \mbox{ as } m \to \infty .
 \label{qm}
 \end{equation}
 Given a rate-$1$ Poisson point process, the collection of not-balanced subsquares 
 can be identified with 
  the random subset $\Xi^k$ in Corollary \ref{C:1} 
  with $q = q_m$.
  In the bounds below we assume that $m$ and $k$ are sufficiently large, independently, 
 
Now consider a rate-$1$ Poisson point process on the whole plane and a tree-network connecting them. 
Write $N$ for the number of Poisson points in the square $[0,n]^2$.
Consider the subtree spanned by all the Poisson points in that square.
This subtree will typically extend outside the square. 
But there will exist a centroid, in the sense of a vertex $v_*$  (maybe a Steiner point, and maybe outside the square) 
such that, writing $B_1, B_2, \ldots$ for the sets of Poisson points within the square that are in the different branches from $v_*$,
the largest such set has size at most $N/2$.
It is then always possible to merge (if necessary) these sets into a bipartition 
$\{B,B^c\}$ of the points in the square such that $N/3 \le |B| \le N/2$.
 The key observation is that the path from any $v \in B$ to any $v^\prime \in B^c$ must go via $v_*$.  
 We will use this to prove the following key result, from which Theorem \ref{T1} will follow quite easily.
 \begin{Lemma}
\label{L:7}
There exists $\beta_0 > 0$ such that, with probability $\to 1$ as $m, k \to \infty$,
there are at least $\beta_0 m^4k$ pairs of points from $B$ and $B^c$ within straight-line distance $2^{1/2}m$
but whose route-length is at least $0.001mk$. 
\end{Lemma}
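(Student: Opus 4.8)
The plan is to push the question down to the lattice statement of Corollary~\ref{C:1}, using the partition $\Sigma_{m,k}$ and the centroid $v_*$ to force long routes. Fix a small constant $\eps>0$. Call a natural subsquare $\sigma$ \emph{$B$-rich} if it contains at least $0.95m^2$ points of $B$, \emph{$B^c$-rich} if at least $0.95m^2$ points of $B^c$, and \emph{mixed} otherwise; since a balanced subsquare contains between $0.98m^2$ and $1.02m^2$ points, a balanced mixed subsquare contains at least $0.03m^2$ points of each of $B,B^c$, while a balanced $B$-rich (resp.\ $B^c$-rich) subsquare contains at most $0.07m^2$ points of $B^c$ (resp.\ $B$). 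Choose $\Box_k$, of side asymptotic to $0.001k$, so that $\Box_k\hspace{-0.1in}+$ is an axis-aligned square of side $\approx 0.001n$ containing the disc of radius $0.0005n$ about $v_*$ with $v_*$ near its centre; if $v_*$ lies too far from $[0,n]^2$ for this to be possible, then no $B$-$B^c$ pair at mutual distance $\le 2^{1/2}m$ can have route-length $\le 0.001mk$ and the lemma holds vacuously. Then every point of $[0,n]^2$ outside $\Box_k\hspace{-0.1in}+$ is at $\ell^\infty$- (hence Euclidean) distance $>0.0005mk$ from $v_*$. Since for $v\in B$, $v'\in B^c$ the tree-route from $v$ to $v'$ passes through $v_*$, its length is at least $\norm{v-v_*}+\norm{v'-v_*}$, so any $B$-$B^c$ pair both of whose points lie outside $\Box_k\hspace{-0.1in}+$ has route-length $>0.001mk$. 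It therefore suffices to exhibit, with probability $\to1$, at least $\beta_0 m^4 k$ $B$-$B^c$ point-pairs at mutual distance $\le 2^{1/2}m$, none meeting $\Box_k\hspace{-0.1in}+$.

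Now split on the number of mixed subsquares. If there are at least $\eps k^2$ of them, then w.h.p.\ at least $\tfrac12\eps k^2$ are balanced (at most $2q_m k^2$ subsquares fail to be balanced, and $q_m$ is small), and since $|\Box_k|\sim(0.001k)^2$ is far smaller than $\tfrac12\eps k^2$, at least $\tfrac13\eps k^2$ of these avoid $\Box_k\hspace{-0.1in}+$; each contributes at least $(0.03m^2)^2$ internal $B$-$B^c$ pairs at distance $\le2^{1/2}m$, giving $\Omega(\eps m^4 k^2)\gg m^4 k$ such pairs, more than enough. Suppose instead there are fewer than $\eps k^2$ mixed subsquares. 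Writing $N=(1+o(1))n^2$ w.h.p.\ and recalling $N/3\le|B|$, summing the $B$-counts over subsquares yields
\[ \tfrac{N}{3}\le|B|\le 1.02\,m^2\cdot\#(B\text{-rich})+0.07\,m^2\cdot\#(B^c\text{-rich})+E, \]
where the error $E$, bounding the contribution of the fewer-than-$\eps k^2$ mixed and at-most-$2q_m k^2$ non-balanced subsquares, is w.h.p.\ $O((\eps+q_m)m^2k^2)$ (the total point-count of any $\eps k^2$ subsquares being w.h.p.\ $O(\eps m^2 k^2)$). Since $\#(B\text{-rich})+\#(B^c\text{-rich})\le k^2$ and $1.02-0.07=0.95$, this forces $\#(B\text{-rich})\ge(\tfrac13-0.07-O(\eps+q_m)-o(1))k^2/0.95>k^2/4$ once $\eps,q_m$ are small and $m,k$ large; the symmetric computation from $N/2\le|B^c|$ gives $\#(B^c\text{-rich})>k^2/4$.

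Colour $G_k$ by declaring every $B^c$-rich subsquare yellow and every other subsquare (i.e.\ $B$-rich or mixed) green; by the previous paragraph both colour classes have at least $k^2/4$ vertices. Apply Corollary~\ref{C:1} with $\Xi^k$ the set of non-balanced subsquares — an i.i.d.\ Bernoulli$(q_m)$ subset of $G_k$ (with $q_m\le q$ for $m$ large, by independence of the Poisson process over disjoint subsquares) — and with the $\Box_k$ chosen above: w.h.p.\ in $k$ there are at least $\alpha k$ green--yellow adjacent pairs of subsquares, neither member lying in $\Xi^k\cup\Box_k$. Each such pair consists of a balanced $B^c$-rich subsquare together with a balanced subsquare that is $B$-rich or mixed, both lying outside $\Box_k\hspace{-0.1in}+$. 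If the green member is $B$-rich, then Lemma~\ref{L:Red} with ``blue''$=B$ applies — its alternatives (a),(b) both fail, since one of the two subsquares has $\ge0.95m^2$ and the other $\le0.07m^2$ points of $B$ — and yields at least $0.088m^4$ $B$-$B^c$ pairs at distance $\le2^{1/2}m$ inside the union of the two subsquares; if the green member is mixed, that subsquare alone contains at least $(0.03m^2)^2$ such pairs. Hence each of the $\alpha k$ adjacent subsquare-pairs supplies at least $c\,m^4$ $B$-$B^c$ pairs at distance $\le2^{1/2}m$, and since a given point-pair can be produced by at most $O(1)$ of the subsquare-pairs, at least $\beta_0 m^4 k$ distinct such pairs result, all avoiding $\Box_k\hspace{-0.1in}+$, as required.

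For the probabilistic bookkeeping: the only random events used are $N=(1+o(1))n^2$, the bound $2q_m k^2$ on the number of non-balanced subsquares together with the $O(\eps m^2 k^2)$ bound on their point-count, and the conclusion of Corollary~\ref{C:1}; this last, exactly as with the event in the proof of Lemma~\ref{L:green}, involves $\Xi^k$ alone and holds simultaneously over all colourings and all admissible subsquares $\Box_k$, so it is unaffected by our $\Box_k$ depending also on the tree, and its validity is monotone in $q$, so using $q=q_m\le q$ is legitimate. On the intersection of these events, one of the two cases applies deterministically. The step I expect to be the main obstacle is Case~2: pushing the two ``rich'' counts above the threshold $k^2/4$ demanded by Corollary~\ref{C:1} is genuinely tight against the constant $\tfrac13$ coming from $|B|\ge N/3$, which is what forces the careful choice of the richness threshold $0.95m^2$; and the passage from the $\alpha k$ coloured adjacent subsquare-pairs to $\beta_0 m^4 k$ honestly distinct point-pairs with provably long routes relies on the bounded-multiplicity and $\Box_k$-exclusion arguments above.
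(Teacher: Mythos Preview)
Your proof is correct and follows essentially the same route as the paper's: classify natural subsquares by their $B$-content, dispose of the many-mixed case by direct counting inside single subsquares, and otherwise use a counting argument (from $|B|\ge N/3$, $|B^c|\ge N/2$) to push both rich classes above $k^2/4$, apply Corollary~\ref{C:1} with $\Xi^k$ the set of non-balanced subsquares, and extract the $B$--$B^c$ point-pairs via Lemma~\ref{L:Red}, the centroid-centred $\Box_k$ forcing the route-length bound. One small slip: when $v_*$ lies far from $[0,n]^2$ the lemma does \emph{not} hold ``vacuously'' --- you still need the case analysis to manufacture the $\beta_0 m^4 k$ close $B$--$B^c$ pairs --- but simply choosing $\Box_k$ arbitrarily in that situation makes the remainder of your argument go through unchanged (the route-length condition is then automatic for every pair you produce).
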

 \begin{proof}
  Color red the points in $B$, and color blue the points in $B^c$. 
 The ``probability" parts of the argument are the following easy consequences of the law of large numbers.
 Outside an event of probability $\to 0$ as $m, k \to \infty$:
 \begin{eqnarray}
& \mbox{ the total number of blue points and the total number of red points  are   $\ge0.33 m^2k^2$;} &\hspace*{0.1in} \label{tt1} \\
& \mbox{  the total number of points not in balanced natural subsquares  is at most $m^2k^2 \psi(m)$, }&  \nonumber \\
& \mbox{where $\psi(m) \downarrow 0$ as $m \to \infty$.} & \label{tt2}
\end{eqnarray}
The remainder of the argument is deterministic, and the precise numerical constants are not important.

 Write $\Box$ for a subsquare of $G_k$ with $0.001k \times 0.001k$ vertices, and 
 write $\Box \hspace{-0.1in} + $ for the corrresponding  square 
 of side $0.001n$ within $[0,n]^2$.
The essential issue is to find a  lower bound for
  \[
\Npair := \min_{\Box \hspace{-0.09in} + } \Npair(\Box \hspace{-0.1in} + ), \quad \mbox{ where }  \]
\vspace*{-0.15in}
 \[
\Npair(\Box \hspace{-0.1in} + )  := \mbox{ number of blue-red  point pairs outside $\Box \hspace{-0.1in} + $ and at distance at most $2^{1/2} m$ apart. }
\]
Write $b_\sigma$ for the number of blue points in the natural subsquare $\sigma$.  
Recall that a balanced natural subsquare must have between $0.98m^2$ and $1.02m^2$ points -- call this the {\em size condition}.
Amongst balanced natural subsquares, $\sigma$, consider
\begin{itemize}
\item the number $S$  for which  $0.09m^2 \le b_\sigma \le 0.89m^2$, 
\item the number $S^<$  for which $b_\sigma < 0.09m^2$,
\item the number $S^>$  for which $b_\sigma > 0.89m^2$. 
\end{itemize}
If  a balanced natural subsquare $\sigma$ has $0.09m^2 \le b_\sigma \le 0.89m^2$ then, by the size condition, there are at least $0.98m^2 - b_\sigma$ red points,
and so at least
$0.08m^4$ blue-red pairs in $\sigma$.
So we immediately have
\[
 \Npair \ge 0.08m^4S^* , \mbox{ where }
S^*:= \min_\Box S(\Box) \mbox{ and }  \]
 \[
S(\Box)  := \mbox{ number of balanced natural subsquares outside $\Box$ for which 
$0.09m^2 \le b_\sigma \le 0.89m^2$.}
\]
where (as above) $\Box$ is a subsquare of $G_k$ with $0.001k \times 0.001k$ vertices.
A given subsquare $\Box$ cannot intersect more than $0.000001k^2$ natural subsquares, and so 
\begin{equation}
 \Npair \ge 0.08m^4 (S -0.000001k^2)  .
 \label{pair1}
 \end{equation}
If $S$ is indeed of order $k^2$ then this inequality gives all we require (see (\ref{pair2})  below) but the key issue is to analyze the case where $S$ is small. 
We can lower bound the total number $N_{blue}$ of blue points by (\ref{tt1}) and upper bound it by  (\ref{tt2}) and the definitions of
$(S,  S^<, S^>)$: this gives
\[ 0.33m^2k^2 \le 0.09m^2 S^<  + 0.89 m^2 S + 1.02 m^2 S^> + m^2k^2 \psi(m) .    \]
Using $S^< + S + S^> \le k^2$ to eliminate the $S^<$ term, this rearranges to
\[ 0.24m^2k^2 \le 0.8m^2 S + 0.93 m^2 S^>     +    m^2k^2 \psi(m) .  \]
With the corresponding inequality arising from counting red points, we obtain that for $m$ sufficiently large
\begin{equation}
\mbox{if $S \le 0.005k^2$ then $\min(S^<, S^>) \ge k^2/4 $.}
\label{L3old}
\end{equation}

Color a natural subsquare $\sigma$ yellow if $b_\sigma < 0.1m^2$, or green otherwise.
In the case $\min(S^<, S^>) \ge k^2/4 $ we can
apply Corollary  \ref{C:1} provided $m$ is sufficiently large (recall (\ref{qm})),  to conclude that
(outside an event of probability $\to 0$ as $k \to \infty$) 
the number of adjacent balanced green-yellow pairs is at least $\alpha k$,
and these can be taken to avoid any choice of $\Box$ corresponding to a choice of $\Box \hspace{-0.1in} + $.
A given subsquare can be in at most $4$ such green-yellow pairs, so we can find $\alpha k/4$  disjoint pairs.
By Lemma \ref{L:Red}, within each such pair there exist at least $0.088m^4$ blue-red pairs
and so in this case
\begin{equation}
 \Npair \ge 0.088m^4 \times \alpha k/4 := \beta_0 m^4k 
 \label{pair2}
 \end{equation}
 for some constant $\beta$.   In the opposite case, that is by (\ref{L3old}) if $S > 0.005k^2$, 
 inequality  (\ref{pair1}) gives an essentially larger bound, so we may assume (\ref{pair2}).

Apply the bound (\ref{pair2}) where the given subsquare $\Box \hspace{-0.1in} + $ is a square of side $0.001mk$ centered near the tree centroid: 
we obtain the conclusion of Lemma \ref{L:7}.
\end{proof}

\paragraph{Completing the proof of Theorem \ref{T1}.}
Take expectation in Lemma \ref{L:7} and re-write in terms of 
\[ r:= 2^{1/2}m, \ \ d:= 0.001mk = 0.001 n \]
as follows. 

{\em (*) There are constants $\beta_1 > 0$ and $r_0, \rho_0 < \infty$ such that, for $r \ge r_0$ and $d/r \ge \rho_0$, 
and for any invariant tree model,
the mean number of pairs of Poisson points within $[0,n]^2$ at distance $\le r$ apart and with route-length $ \ge d$ is at least
$\beta_1 d r^3$.}

Let $\chi(r,d)$ be the probability, in a given model of an invariant tree-network over the Poisson points,
that between two typical Poisson points at distance $\le r$
 the route-length is $\ge d$. 
 The mean total number of pairs within $[0,n]^2$ at distance $\le r$ apart
  is bounded above by 
 $\frac{1}{2} n^2 \pi r^2$.
 So the mean number of such pairs with route-length  $\ge d$ is bounded above by 
  $\frac{1}{2} n^2 \pi r^2 \chi(r,d)$.
  This holds in particular when $d = 0.001n$, and now comparing 
  the upper and lower bound we find
    \begin{equation}
   \chi(r,d)   \ge \beta_2 \sfrac{r}{d} ; \quad r \ge r_0, d/r \ge \rho_0
  \label{chird}
  \end{equation}
  for a constant $\beta_2$.
  In the notation of Theorem \ref{T1} we have $\chi(r,d) = \Pr(D_r \ge d)$,
  and this inequality is equivalent to 
the form stated in Theorem \ref{T1}.

\section{Remarks}

\subsection{The Euclidean MST}
\label{sec:MST}
Consider the {\em random geometric graph} $\GG(r_0)$ whose  vertices form the rate-$1$ Poisson point process and whose edges link all pairs of points at Euclidean distance at most $r_0$.  
Write $N(v,r_0)$  for the number of vertices  in the component $C(v,r_0)$ of  $\GG(r_0)$   containing a typical vertex $v$. 
It is well known \cite{meester} that
\begin{equation}
\mbox{ for sufficiently small $r_0$, all moments of $N(v,r_0)$  are finite.}
\label{rsmall}
\end{equation}
Consider now the Euclidean MST over the Poisson process.
The restriction of the MST to $C(v,r_0)$ is a spanning tree within $C(v,r_0)$.
So the route length in the MST between $v$ and another vertex $v^\prime$ at distance $\le r_0$ is at most $r_0 N(v,r_0)$.
It easily follows (via a size-biasing argument) that the mean distance function $\rho(r)$ at (\ref{def:rho})  is such that $\rho(r_0)$ is finite when (\ref{rsmall}) holds.

\subsection{Constructing invariant tree-networks}
\label{sec:LWC}
Some examples are given in \cite{holroyd}; here is our general discussion.
Take an arbitrary tree-network linking $m^2$ independent uniform random vertices in the continuum square $[0,m]^2$,
and write $\ell_m$ for the expectation of the average (over vertices) length of the edge from the vertex toward the centroid.  
Randomly re-center, that is translate the plane as $(x,y) \to (x- U,y-V)$ for $(U,V)$ uniform on $[0,m]^2$, and then apply a uniform random rotation.
A sequence of such networks with $\ell_m$ bounded as $m \to \infty$ is tight in the natural ``local weak convergence" topology, and any subsequential weak limit network has invariant distribution.
This very general construction suggests that the class of invariant tree-networks should be very rich.
But there are two issues. 

In general the weak limit structure is guaranteed to be a forest with infinite tree-components, but is not guaranteed to be a single tree.
The planar MST limit is known to be a tree \cite{alexander} but the proof heavily exploits its explicit structure; 
there seem to be no useful general methods for proving that a construction via local weak convergence gives a limit tree.
To illustrate a more algorithmic construction, consider ``Poisson rain" on the plane -- rate $1$ per unit ares per unit time over time  $0<t \le 1$.
The construction rule
``each arriving point is a child of the nearest existing point" 
gives a genealogical tree studied in  \cite{partitions}.
Representing the parent-child relation by drawing a line segment, the network is not a tree because such lines may cross, but instead one can 
draw just the part of the segment from the child to the existing network, and the
analysis in   \cite{partitions} implies this will be a tree. 
Presumably other rules for connecting arriving points to the existing network within this Poisson rain framework will also yield invariant trees.

The second issue is illustrated by the notion of minimal (shortest length) Steiner tree.  
In the finite random setting this is a.s. unique.
The local weak convergence scheme produces  limit random forests attaining the minimum length-per-unit area possible, but 
(even if one could prove that limits are trees) it is not clear how to prove there is an a.s. {\em unique} limit tree attaining that minimum.

More abstractly, infinite trees arising as local weak limits are {\em unimodular}: general theory for unimodular trees at the graph-theoretic level 
is given in 
\cite{unimodular,baccelli} but is not specifically adapted to the spatial setting.

\subsection{Outline of possible counter-examples to the natural conjecture}
\label{sec:counter}
Take $r_i \to \infty$ very fast and $\delta_i \to 0$ very fast.
Draw a line segment between the  Poisson point pairs which are at some distance in $\cup_i [r_i,r_i+\delta_i]$.
One can arrange that the density of intersections of these lines is arbitrarily small.  
Break the (rare) circuits.  Then assign random arrival times  and use a ``Poisson rain" construction as in the section above.
In this way it might be possible to construct an invariant tree-network such that $\rho(r) < \infty$ for $r \in  \cup_i [r_i,r_i+\delta_i]$.

\subsection{General spatial networks}
\label{sec:rem}
For general (i.e. non-tree) invariant networks over Poisson points, the quantity
\[ \rho(r) := \mbox{ mean route length between two Poisson points at distance } r  \]
at (\ref{def:rho}) is a natural object of study.  
From  \cite{me-arx-1,hirsch} we know that
under very weak assumptions 
(which roughly correspond to ``not a tree"), not only is 
$\rho(r) < \infty$ for all $r$, but also (by subadditivity, heuristically) there exists the limit
\[ \lim_{r \to \infty} r^{-1}  \rho(r) := \rho^* < \infty .
\]
That is, average route lengths are asymptotically linear in straight-line distance. 
But quantitative analytic study of $\rho(r)$ or $\rho^*$ seems very difficult even in simple-to-describe network models.

For reasons explained  in \cite{shun}, it is not always wise to use $\rho^*$ 
as a summary statistic for efficiency at providing short routes. 
Instead, in \cite{shun} we recommend the statistic $\sup_r r^{-1}  \rho(r)$ to ensure that the network
provides short routes {\em on all scales}.  
This line of thought also motivates study of exactly self-similar networks (so $r^{-1}  \rho(r)$ is constant) on the
continuum plane \cite{ganesan}.

\paragraph{Acknowledgements.}
I thank Yuval Peres and Russ Lyons for the references to \cite{peres,lyons}, and Geoffrey Grimmett for comments on the contour method and for catching an error in an early draft.


\end{document}